\documentclass[11pt,sumlimits,nointlimits,namelimits,reqno]{amsart}

\usepackage{amsopn, amsmath, amssymb, amsthm,amscd, amsrefs}

\usepackage{cite}

\usepackage{enumitem}

\usepackage{graphicx,tabularx}
\usepackage[dvipsnames]{xcolor}
\usepackage{tikz}

\usepackage{xstring}

\usepackage[mathscr]{eucal}

\usepackage{mathtools, contour} 
\usepackage[normalem]{ulem} 

\usepackage{bbm, bm}

\newtheorem{theorem}{Theorem}[section]
\newtheorem{proposition}[theorem]{Proposition}

\newtheorem{corollary}[theorem]{Corollary}
\newtheorem{lemma}[theorem]{Lemma}

\newtheorem{remark}{Remark}
\newtheorem{question}{Question}

\theoremstyle{definition}
\newtheorem{definition}{Definition}

\numberwithin{equation}{section}

\newcommand{\hfspace}{\hspace{0.25cm}}

\newcommand{\N}{\mathbb{N}}
\newcommand{\Z}{\mathbb{Z}}
\newcommand{\Q}{\mathbb{Q}}
\newcommand{\R}{\mathbb{R}}

\newcommand{\Acal}{{\mathcal A}}
\newcommand{\Bcal}{{\mathcal B}}
\newcommand{\Ccal}{{\mathcal C}}

\newcommand{\Gcal}{{\mathcal G}}
\newcommand{\Hcal}{{\mathcal H}}

\newcommand{\Lcal}{{\mathcal L}}

\newcommand{\Pcal}{{\mathcal P}}

\newcommand{\Area}{\mathrm{area}}
 
\newcommand{\bfx}{{\bf x}} 
\newcommand{\bfy}{{\bf y}}

\newcommand{\bfc}{{\bf c}}
\newcommand{\bfh}{{\bf h}}

\newcommand{\floor}[1]{\lfloor #1 \rfloor }
\newcommand{\fracpart}[1]{\left\{ #1\right\}}

\newcommand{\dens}{\mathrm{dens}}
\newcommand{\densityofnwith}[1]{\dens\big(\{n\in\N:#1\}\big)}
\newcommand{\upperdens}{\overline{\dens}}
\newcommand{\lowerdens}{\underline{\dens}}

\newcommand{\blueurl}[1]{{ \color{blue}\url{#1} }}

\begin{document}

\title [On the greatest common divisor of $n,\floor{\alpha_1n},\floor{\alpha_2n^2},...,\floor{\alpha_kn^k}$]
{On the greatest common divisor of $n,\floor{\alpha_1n},\floor{\alpha_2n^2},...,\floor{\alpha_kn^k}$}

\author{J\'er\'emy Champagne}
\address{J. Champagne, Department of Mathematics and Statistics,
University of Ottawa,
150 Louis-Pasteur Private,
Ottawa, ON, K1N 9B4, Canada }
\email{jchampa3@uottawa.ca}

\keywords{Greatest common divisor, equidistribution mod 1, Beatty numbers}
\subjclass[2020]{11N25, 11K06}

\begin{abstract}
    We answer a question of Bergelson and Richter about the probability of the relation $\gcd(n,\floor{\alpha_1n},\floor{\alpha_2n^2},...,\floor{\alpha_kn^k})=1$ for $n\in\N$ when $\alpha_1,...,\alpha_k$ are fixed irrational numbers. In particular, we avoid the use of exponential sums, as they are difficult to control when one of $\alpha_1,...,\alpha_k$ admits very efficient rational approximations. Instead, we use an elementary method similar to that of Erd\H{o}s and Lorentz, together with some general bounds on those $n$'s which share a large prime divisor with $\floor{\alpha_1n}$.
\end{abstract}

   \maketitle
\section{Introduction}\label{sec1}

It is a well-known fact, often attributed to Chebyshev, that the probability of a random pair $(m,n)\in\N^2$ being coprime tends towards
$$\prod_{p\textnormal{ prime}}\left(1-\frac{1}{p^2}\right)=\frac{1}{\zeta(2)}=\frac{6}{\pi^2}\approx 60.8\%,$$ 
where $\zeta$ denotes the Zeta-function of Riemann. To be more precise, we mean that the ratio
\begin{equation*}\label{S1eq:limchebyshev}
    \frac{|\{(m,n)\in\{1,...,M\}\times\{1,...,N\}:\gcd(m,n)=1\}|}{MN}
\end{equation*}
converges to $\zeta(2)^{-1}$ as $\min\{M,N\}\to\infty$.\\ 

Recall that any real number $x$ can be written as $x=\floor{x}+\{x\}$, where $\floor{x}\in\Z$ denotes its integer part and $\{x\}\in[0,1)$ denotes its fractional part. Watson \cite{WATSONgcdnalphan} showed in 1953 that, given an irrational number $\alpha$, the probability of the relation $\gcd(n,\floor{\alpha n})=1$ for a random $n\in\N$ also tends to $\zeta(2)^{-1}$. In other words, the Beatty sequence $\floor{\alpha n}$ exhibits a similar behaviour to that of a random integer chosen independently from $n$. Recall that the \textit{density} of a subset $\Acal\subseteq\N$ is given by the limit
$$\dens\,\Acal:=\lim_{N\to\infty}\frac{|\Acal\cap\{1,...,N\}|}{N},$$
which may or may not exist. The result of Watson can be stated more precisely as
\begin{equation}\label{S1eq:Watsondensity}
    \densityofnwith{\gcd(n,\floor{\alpha n})=1}=\frac{1}{\zeta(2)}.
\end{equation}
As made explicit in the later work of Spilker \cite{SPILKERalmostperiodicwatson}, the fact that the sequence $(\alpha n)_{n\in\N}$ is equidistributed mod 1 can also be used to obtain \eqref{S1eq:Watsondensity}. Recall that a sequence $(x_n)_{n\in\N}$ in $\R$ is equidistributed mod 1 if
$$\densityofnwith{a\leq\{x_n\}<b}=b-a$$
holds for all subinterval $[a,b)\subseteq[0,1)$.\\

This initial work of Watson prompted several authors to prove that, for different functions $f:[1,\infty)\to\R$, the relation $\gcd(n,\floor{f(n)})=1$ occurs for a set of $n\in\N$ whose density is also $\zeta(2)^{-1}$. In particular, Erd\H{o}s and Lorentz \cite{ELprobathatngncoprime} showed in 1959 that this density holds whenever $f(x)$ satisfies the following conditions.
\begin{enumerate}[label=EL\arabic*\textnormal{)}]
    \item $f$ is monotone increasing, positive, and differentiable with a piecewise continuous derivative,
    \item $\big(f(n)\big)_{n\in\N}$ is \textit{homogeneously} equidistributed mod 1, in the sense that $\big(\frac{1}{D}f(Dn)\big)_{n\in\N}$ is equidistributed mod 1 for all $D\in\N$,\medskip
    \item $\displaystyle\lim_{x\to\infty}\frac{f(x)\log\log x}{x}=0$,\medskip
    \item $\displaystyle\lim_{x\to\infty}\frac{xf'(x)}{\log\log\log x}=\infty$,\medskip
    \item There exists a constant $M>0$ such that $f'(y)\leq Mf'(x)$ for all $y\geq x>0$.
\end{enumerate}
Some typical examples of functions satisfying those five conditions would be those of the form $f(x)=x^c$ with $c\in(0,1)$, or $f(x)=\log(x)^r$ with $r>1$. It is worth noting that Delmer and Deshouillers \cite{DDpiatetskishapiro} were also able to prove
$$\densityofnwith{\gcd(n,\floor{n^c})=1}=\frac{1}{\zeta(2)}$$
for any parameter $c\in(1,+\infty)\setminus\N$.\\

In 2017, Bergelson and Richter \cite{BRgcdintparthardyfield} proved that $\gcd(n,\floor{f(n)})=1$ also occurs for a set of $n\in\N$ of density $\zeta(2)^{-1}$ whenever $f:[1,\infty)\to\R$ satisfies the conditions
\begin{enumerate}[label=BR\arabic*\textnormal{)}]
    \item $f$ belongs to a Hardy field\footnote{Hardy fields are fields of real-valued functions (up to equivalence given by their germ at $\infty$) which are closed under differentiation. For more information on Hardy fields and their role in the theory of equidistributed sequences, see \cite{BOSHERNITZANunifdirstrhardyfields}.} $\Hcal$,\medskip
    \item $\displaystyle \lim_{x\to\infty}\frac{f(x)}{\log (x)\log\log\log\log (x)}=\infty$,\medskip
    \item There exists $s\in\N$ such that $\displaystyle \lim_{x\to\infty}\frac{f(x)}{x^{s-1}}=\infty$ and $\displaystyle \lim_{x\to\infty}\frac{f(x)}{x^s}=0$.
\end{enumerate}
Their article also contains results involving multiple real-valued functions $f_1,....,f_k$ belonging to a same Hardy field $\Hcal$, who each satisfy conditions BR2 and BR3 individually, as well as
\begin{equation}
    \lim_{x\to\infty}\frac{f_{j+1}(x)}{f_j(x)(\log\log x)^2}=\infty
\end{equation}
for $j=1,...,k-1$. Under these hypotheses, Bergelson and Richter show that
\begin{equation*}
    \densityofnwith{\gcd(n,\floor{f_1(n)},...,\floor{f_k(n)})=1}=\frac{1}{\zeta(k+1)},
\end{equation*}
where indeed $\zeta(k+1)^{-1}$ corresponds to the probability of $\gcd(n_1,...,n_{k+1})=1$ for random integers $n_1,...,n_{k+1}\in\N$ chosen independently.\\

It is important to note that condition BR3 excludes the case where one of $f_1(x),...,f_k(x)$ is a polynomial with respect to $x$. With this constraint in mind, Bergelson and Richter formulated the following question (Question 1 from \cite{BRgcdintparthardyfield}).

\begin{question}[Bergelson and Richter, 2017]\label{S1q:BRmainquestion}
    Let $\alpha_1,\alpha_2,...,\alpha_k\in\R$ be irrational. Is it true that
    \begin{equation*}
        \densityofnwith{\gcd(n,\floor{\alpha_1n},\floor{\alpha_2n^2}...,\floor{\alpha_kn^k})=1}=\frac{1}{\zeta(k+1)}\,?
    \end{equation*}
\end{question}
Question \ref{S1q:BRmainquestion} was answered partially in a manuscript of Banks and Shparlinski \cite{BSgcdintpartpoly} using the Erd\H{o}s-Tur\'an inequality together with bounds on Weyl sums. However, this method faces significant difficulties in the case where at least one of $\alpha_1,...,\alpha_k\in\R$ is of Liouville-type, as it weakens the bounds on the corresponding Weyl sums. Recall that the \textit{Diophantine-type} of a number $\alpha\in\R$ is the supremum $\tau(\alpha)\in[1,+\infty]$ of all $\tau\geq 1$ such that the inequality $|\alpha-a/q|<q^{-\tau}$ has infinitely many solutions $(a,q)\in\Z\times\N$. Liouville-type numbers then correspond to those $\alpha\in\R$ with $\tau(\alpha)=\infty$.\\

In this article, we offer a complete and positive answer to Question \ref{S1q:BRmainquestion}. The method that we use is similar to that of Erd\H{o}s and Lorentz, but it also takes inspiration in the alternative proofs of Watson's result obtained by Estermann \cite{ESTERMANNprimitivelatticepoints} and Spilker \cite{SPILKERalmostperiodicwatson} respectively. In particular, we rely heavily on the equidistribution of the sequence $(\alpha_1n,\alpha_2n^2,...,\alpha_kn^k)\in\R^k$ mod 1. Recall that a sequence $(\bfx_n)_{n\in\N}$ in $\R^k$ is equidistributed mod 1 if, for any box $\Bcal=[a_1,b_1)\times\cdots\times[a_k,b_k)\subseteq[0,1)^k$, we have
\begin{equation*}
    \dens\big(\{n\in\N:\{\bfx_n\}\in\Bcal\}\big)=(b_1-a_1)\cdots(b_k-a_k),
\end{equation*}
where $\{\bfx\}$ is understood as $\big(\{x_1\},...,\{x_k\}\big)\in[0,1)^k$ for any $\bfx=(x_1,...,x_k)\in\R^k$. To simplify our proofs, we use a slightly more restrictive definition than the homogeneous equidistribution introduced by Erd\H{o}s and Lorentz.

\begin{definition}
    Let $k\in\N$. We say that a sequence $(\bfx_n)_{n\in\N}$ in $\R^k$ is \textit{totally-homogeneously} equidistributed mod 1 if, for any $D,b\in\N$, the sequence $\big(\frac{1}{D}\bfx_{Dn+b}\big)_{n\in\N}$ is equidistributed mod 1.
\end{definition}

The core of our argument applies to any sequence which is totally-homogeneously equidistributed mod 1, except for some convergence considerations where we must leverage the presence of the Beatty sequence $\floor{\alpha_1n}$. This leads to the following general result.

\begin{theorem}\label{S1thm:totallyhomog}
    Let $f_1,...,f_k:\N\to\R$ be functions such that
    \begin{enumerate}
        \item the sequence in $\R^k$ given by $\bfx_n:=\big(f_1(n),...,f_k(n)\big)$ is totally-homogeneously equidistributed mod 1,
        \item $f_1$ is of the form $f_1(n)=\alpha n+\beta$ for some $\alpha,\beta\in\R$ with $\alpha$ irrational.
    \end{enumerate}
    Then, we have
    \begin{equation*}\label{S1eq:thmdensity}
        \dens\big(\{n\in\N:\gcd(n,\floor{f_1(n)},...,\floor{f_k(n)})=1\}\big)=\frac{1}{\zeta(k+1)}.
    \end{equation*}
\end{theorem}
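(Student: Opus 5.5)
We follow the sieve strategy of Erd\H{o}s and Lorentz: first handle the small primes exactly using equidistribution, then show the large primes contribute negligibly. Write $g(n):=\gcd(n,\floor{f_1(n)},\dots,\floor{f_k(n)})$. For a squarefree $d\in\N$, we first compute the density of $\{n: d\mid g(n)\}$. Write $n=dm+b$; since $d\mid n$ forces $b\equiv 0$, the relevant subsequence is $n=dm$. For each $j$, the condition $d\mid\floor{f_j(dm)}$ is equivalent to $\{f_j(dm)/d\}\in[0,1/d)$. By hypothesis (1) with $D=d$ and $b=0$, which we reduce to the shift $b=d$ by reindexing $m\mapsto m+1$, the sequence $\frac1d\bfx_{dm}$ is equidistributed mod 1 in $\R^k$. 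Hence the box $[0,1/d)^k$ has density $d^{-k}$, and
\[
\dens\{n: d\mid g(n)\}=\frac{1}{d}\cdot\frac{1}{d^{k}}=\frac{1}{d^{k+1}}.
\]

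Next we fix a parameter $z$ and let $P(z)=\prod_{p\le z}p$. Möbius inversion over divisors of $P(z)$ shows that the set of $n$ with no prime $p\le z$ dividing $g(n)$ has density $\sum_{d\mid P(z)}\mu(d)d^{-(k+1)}=\prod_{p\le z}(1-p^{-(k+1)})$. This is a finite inclusion–exclusion, so densities add exactly. As $z\to\infty$ this tends to $\zeta(k+1)^{-1}$. It remains to show that the upper density of $E_z:=\{n: p\mid g(n)\text{ for some } p>z\}$ tends to $0$ as $z\to\infty$. Note that $E_z$ is contained in $\{n:\exists p>z,\ p\mid n,\ p\mid\floor{\alpha n+\beta}\}$, so only $f_1$ matters here.

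We split the primes into three ranges.

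\emph{Primes $z<p\le y$ with $y$ fixed.} Equidistribution applied to the single coordinate $f_1$ gives density $p^{-2}$ for each such $p$. The total is at most $\sum_{p>z}p^{-2}\to0$.

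\emph{Large primes $p>y$ with $p\le N$ and $n\le N$.} Write $n=pm$ with $m\le N/p$. Then $p\mid\floor{\alpha pm+\beta}$ means $\{\alpha m+\beta/p\}<1/p$, up to the integer part. Equivalently, $\alpha pm+\beta\in[p\ell,p\ell+1)$, that is, $|\alpha m-\ell|<1/p+O(1/p)$ for an integer $\ell\asymp\alpha m$. So we must count pairs $(p,m)$ with $pm\le N$ and $\|\alpha m\|\ll 1/p$. This is the main obstacle: when $\alpha$ is Liouville, $\|\alpha m\|$ can be very small for special $m$, and uniform discrepancy bounds fail.

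The approach we would try first is to fix $m$ rather than $p$. For a given $m$, the admissible $p$ satisfy $p\ll 1/\|\alpha m\|$ and $p\le N/m$. Moreover, $p$ must divide the integer $\floor{\alpha pm+\beta}$. Since $\alpha pm+\beta$ lies within $1$ of $p\ell$, the value $\ell$ is essentially determined by $m$, namely $\ell=\mathrm{round}(\alpha m)$, and $p$ ranges over primes in an interval of length $O(1/(m\|\alpha m\|))$ relative to scale. A cleaner route is to count pairs $(m,\ell)$ with $|\alpha m-\ell|\le c/p$. For dyadic ranges $p\sim P$, $m\le N/P$, the number of $m\le M:=N/P$ with $\|\alpha m\|\le c/P$ is at most $M\cdot O(1/P)+o(M)$ by one-dimensional equidistribution of $\alpha m$, uniformly only for $P$ bounded. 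For $P$ large relative to $M$, we instead use the gap principle: if $m_1<m_2$ both satisfy $\|\alpha m_i\|\le c/P$, then $\|\alpha(m_2-m_1)\|\le 2c/P$. So these $m$ are spaced at least by $q_P$, the least $q$ with $\|\alpha q\|\le 2c/P$, and $q_P\to\infty$ as $P\to\infty$ because $\alpha$ is irrational. The count is therefore $\le M/q_P+1$. Summing the resulting $\ll \sum_{p\sim P}$ bounds over dyadic $P>y$ gives roughly $N\sum_{P>y}\frac{\pi(2P)}{P}\cdot\frac{1}{q_P}$. This does not obviously converge, which is exactly the difficulty.

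To fix this, we would exploit that $p\mid\floor{\alpha pm+\beta}$ also requires $\{\alpha m+\beta/p\}\in[0,1/p)$ on a one-sided window. Summing over $p$ for fixed $m$ then counts primes $p$ in a range determined by $\|\alpha m\|$. For fixed $m$ with $\delta=\alpha m-\ell$, the condition reads $0\le p\delta+\beta<1$ (after adjusting $\ell$), so $p$ lies in an interval of length $1/|\delta|$, and when $\delta\neq 0$ there are $\ll 1/|\delta|$ such $p$. We then bound $\sum_{m\le N/y}\min(N/m,\,1/\|\alpha m\|)$, restricted to $m$ where $1/\|\alpha m\|>y$. Handling this sum with the gap principle in blocks of length $q_P$ along continued fraction denominators is where we expect the real work to lie, and it should give $o(N)$ as $y\to\infty$. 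Finally, letting $N\to\infty$, then $y\to\infty$, then $z\to\infty$ gives the theorem.
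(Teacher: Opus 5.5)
\textbf{There is a genuine gap: the large-prime estimate.} Your reduction is correct and is essentially the paper's Section~2. You use M\"obius inversion over $d\mid P(z)$ where the paper uses the Chinese Remainder Theorem, plus the descending-chain argument, and the fixed primes $z<p\le y$ are fine. Everything then rests on $\lim_{z\to\infty}\upperdens E_z=0$, and that is the whole difficulty. You leave the bound for $\sum_{m\le N/y}\min(N/m,1/\|\alpha m\|)$ unproved. That bound is in fact false for Liouville-type $\alpha$, so no continued-fraction bookkeeping will rescue the union bound over primes $p>y$.

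Take $\beta=0$ and suppose $q\alpha=a+\epsilon$ with $\epsilon>0$ very small. For $n=qj$ with $j<1/\epsilon$ one gets $\floor{\alpha n}=aj$, so $j\mid\gcd(n,\floor{\alpha n})$. Put $N\approx q/\epsilon$.
\begin{itemize}
\item Restricting your sum to $m=jq$ with $j\le N/(yq)$ gives $\sum_j N/(jq)\approx (N/q)\log(1/(y\epsilon))$. This is $\gg Nq$ once $\epsilon\le e^{-q^2}$.
\item Keeping $p$ prime does not help. We have $\sum_{p>y}\#\{n\le N:p\mid n,\ p\mid\floor{\alpha n}\}\ge\sum_{j\le N/q}\#\{p>y:p\mid j\}\approx (N/q)(\log\log(1/\epsilon)-\log\log y)$. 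This is again $\gg Nq$ once $\log\log(1/\epsilon)\ge q^2$.
\end{itemize}
The number $\alpha=\sum_k 10^{-t_k}$, with $t_k$ growing fast enough, has infinitely many such $q=10^{t_K}$. For this $\alpha$ your normalized count is unbounded in $N$ for every fixed $y$. The multiples of $q$ have density only $1/q$, but the union bound counts each one with multiplicity equal to its number of large prime factors.

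\textbf{The missing idea is to remove those $n$ before applying the union bound.} The paper proceeds as follows.
\begin{itemize}
\item Fix $N$ and use Dirichlet's theorem to pick $q=q(N)\le N^{1/2}$ and $a$ coprime to $q$ with $|q\alpha-a|\le N^{-1/2}$.
\item Discard the $n\le N$ with $q\mid n$. There are at most $N/q(N)=o(N)$ of them, and each is counted once.
\item For the remaining $n$, $q\floor{\alpha n+\beta}-an$ is a nonzero integer (because $q\nmid an$). It is divisible by $\gcd(n,\floor{\alpha n+\beta})$ and has size $\ll N^{1/2}$, so this gcd is $\ll N^{1/2}$.
\item The Pick's-theorem bound $|\Gcal_d(N)|\ll N/d^2+1$ counts the $n\le N$ whose gcd is exactly $d$. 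Summing it over $d=pr\ll N^{1/2}$ gives $\ll N/p^2+N^{1/2}/p$ for each prime $p$.
\item Summing over $z<p\ll N$ gives a total of $N\sum_{p>z}p^{-2}+O(N^{1/2}\log\log N)+O(N/q(N))$, which is exactly what you need.
\end{itemize}
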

For example, Theorem \ref{S1thm:totallyhomog} can be applied to show that $\gcd(n,\floor{\pi n}, \floor{\sqrt n})=1$ happens with probability $\zeta(3)^{-1}$, using only the equidistributive properties of the sequence $(\pi n, \sqrt n)$ mod 1.\\

In the case where $f_1,...,f_k$ are polynomials, one may use Weyl's Equidistribution Theorem to show that the sequence $\big(f_1(n),...,f_k(n)\big)$ is equidistributed mod 1 if and only if the polynomial
\begin{equation}\label{S1eq:defineFhx}
    F_\bfh(x):=h_1f_1(x)+\cdots+h_kf_k(x)
\end{equation}
admits an irrational coefficient other than its constant term for any choice of non-zero $\bfh=(h_1,...,h_k)\in\Z^k$ (see Theorems 3.2 and 6.2 from \cite[Chapter 1]{KNunifdistr}).  When considering totally-homogeneous equidistribution, it suffices to notice that
$$h_1\left(\frac{1}{D}f_1(Dx+b)\right)+\cdots+h_k\left(\frac{1}{D}f_k(Dx+b)\right)=\frac{1}{D}F_\bfh(Dx+b)\quad\quad (D,b\in\N),$$
is a polynomial in $x$ which has a non-constant irrational coefficient whenever $F_\bfh(x)$ does. This leads to the following corollary of Theorem \ref{S1thm:totallyhomog}.

\begin{corollary}\label{S1cor:polynomials}
    Let $f_1(x),...,f_k(x)\in\R[x]$ be such that $\deg f_1=1$ and such that the polynomial $F_\bfh(x)$ given by \eqref{S1eq:defineFhx} admits an irrational coefficient other than its constant term for any non-zero $\bfh\in\Z^k$. Then we have
    \begin{equation*}\label{S1eq:polynomials}
        \densityofnwith{\gcd(n,\floor{f_1(n)},...,\floor{f_k(n)})=1}=\frac{1}{\zeta(k+1)}.
    \end{equation*}
\end{corollary}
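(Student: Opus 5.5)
The corollary will follow by applying Theorem \ref{S1thm:totallyhomog} to the polynomials $f_1,\dots,f_k$ restricted to $\N$. This requires checking its two hypotheses.

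Hypothesis (2) is straightforward. Since $\deg f_1=1$, we can write $f_1(x)=\alpha x+\beta$ with $\alpha\neq 0$. Taking $\bfh=(1,0,\dots,0)$, the polynomial $F_\bfh=f_1$ must have an irrational non-constant coefficient, and its only non-constant coefficient is $\alpha$. Hence $\alpha$ is irrational.

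For hypothesis (1), fix $D,b\in\N$ and put $g_j(x):=\frac1D f_j(Dx+b)$. We must show that $\big(g_1(n),\dots,g_k(n)\big)_{n\in\N}$ is equidistributed mod 1. By Weyl's criterion (Theorem 6.2 of \cite[Chapter 1]{KNunifdistr}), it suffices to show that for every non-zero $\bfh\in\Z^k$ the scalar sequence $G_\bfh(n):=\sum_j h_jg_j(n)$ is equidistributed mod 1. Indeed, equidistribution of $G_\bfh$ gives $\frac1N\sum_{n\le N}e(G_\bfh(n))\to 0$, which is exactly Weyl's criterion for the vector sequence. Now $G_\bfh(x)=\frac1D F_\bfh(Dx+b)$ is a real polynomial, so by Weyl's theorem for polynomials (Theorem 3.2 of \cite[Chapter 1]{KNunifdistr}) it is enough that $G_\bfh$ has an irrational non-constant coefficient.

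The only real point is this coefficient claim, which I will prove by a triangular-expansion argument. Write $F_\bfh(x)=\sum_{i=0}^d c_ix^i$ and let $m\ge1$ be the largest index with $c_m\notin\Q$; such an $m$ exists by hypothesis. Expanding,
$$\frac1D F_\bfh(Dx+b)=\sum_{r=0}^{d}\Big(\sum_{i\ge r}\tbinom{i}{r}c_iD^{i-1}b^{i-r}\Big)x^r .$$
Look at the coefficient of $x^m$. It equals $c_mD^{m-1}$ plus a rational combination of the $c_i$ with $i>m$, all of which are rational. It is therefore irrational, and since $m\ge1$ it is a non-constant coefficient. This completes the verification of hypothesis (1).

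Theorem \ref{S1thm:totallyhomog} then yields the stated density $\zeta(k+1)^{-1}$. I do not expect any genuine obstacle here. The one point needing care is choosing the \emph{top} irrational coefficient, so that lower-order cancellation in the expansion cannot destroy irrationality.
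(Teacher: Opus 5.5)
Your proof is correct and follows the paper's route: the paper obtains the corollary from Theorem \ref{S1thm:totallyhomog} by combining Weyl's theorem with the observation that $\frac{1}{D}F_\bfh(Dx+b)$ keeps a non-constant irrational coefficient. Your top-irrational-coefficient argument, and your check via $\bfh=(1,0,\dots,0)$ that $\alpha$ is irrational, fill in details the paper leaves implicit. One cosmetic slip: in your expansion the power of $D$ should be $D^{r-1}$, not $D^{i-1}$, but this does not affect the argument, since the $i=m$ term is $c_mD^{m-1}$ either way and the remaining terms stay rational.
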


Corollary \ref{S1cor:polynomials} directly answers Question \ref{S1q:BRmainquestion} when it is applied to the functions 
$$f_1(x)=\alpha_1x,\quad f_2(x)=\alpha_2x^2,\quad ...,\quad f_k(x)=\alpha_kx^k$$ 
with $\alpha_1,...,\alpha_k$ irrational. Indeed, the polynomial $F_\bfh(x)$ then corresponds simply to $h_1\alpha_1x+\cdots+h_k\alpha_k x^k$, which admits the irrational coefficient $h_j\alpha_j$ whenever $h_j$ is a non-zero component of $\bfh=(h_1,...,h_k)\in\Z^k$. Using Corollary \ref{S1cor:polynomials}, one can also prove other density results such as
$$\densityofnwith{\gcd(n,\floor{\alpha_1n},...,\floor{\alpha_kn})=1}=\frac{1}{\zeta(k+1)}$$
when $1,\alpha_1,...,\alpha_k\in\R$ are linearly independent over $\Q$, or
$$\densityofnwith{\gcd(n,\floor{\alpha n},\floor{\beta_1n^2},\floor{\beta_2n^2})=1}=\frac{1}{\zeta(4)}\approx 92.39\%$$
when $\alpha\in\R$ is irrational and $1,\beta_1,\beta_2\in\R$ are linearly independent over $\Q$. In fact, our method could just as easily be used to prove that
$$\densityofnwith{\gcd(\floor{\alpha_1n},...,\floor{\alpha_kn})=1}=\frac{1}{\zeta(k)}$$
whenever $1,\alpha_1,...,\alpha_k\in\R$ are linearly independent over $\Q$, which is a result that was mentioned in passing by Bergelson and Richter (see Theorem 22 from \cite{BRgcdintparthardyfield}).\\ 

It is important to note that Condition (2) from Theorem \ref{S1thm:totallyhomog} (or the assumption $\deg f_1=1$ in Corollary \ref{S1cor:polynomials}) is required only for the purpose of bounding the possible size of $\gcd(n,\floor{f_1(n)})$. However, there are no immediate reason to think that the theorem should not hold with other types of functions. For example, the work presented in \cite{ELprobathatngncoprime} is sufficient to show that Theorem \ref{S1thm:totallyhomog} holds whenever $f_1(x)$ satisfies conditions EL1,... EL5 (in place of Condition 2). Similarly, Theorem 2 in the article of Delmer and Deshouillers \cite{DDpiatetskishapiro} is enough to show that Theorem \ref{S1thm:totallyhomog} holds when $f_1$ is instead of the form $f_1(x)=x^c$ with $c\in(1,+\infty)\setminus\N$. With some more effort, it seems plausible that \cite{BRgcdintparthardyfield} can be used to show the same thing about functions $f_1$ satisfying BR1, BR2 and BR3. \\

In contrast, the case where $f_1(x)$ is a polynomial of degree at least $2$ remains seemingly inaccessible to the current technology. To obtain such results, a first step would be to answer the following, which we leave as an open question.

\begin{question}
    Given an irrational $\alpha\in\R$ and some $k\in\N$ with $k\geq 2$, is it true that
    $$\densityofnwith{\gcd(n,\floor{\alpha n^k})=1}=\frac{1}{\zeta(2)}\,?$$
\end{question}

\noindent\textbf{Acknowledgement.} I would like to thank my Ph.D. supervisor, Professor Liu, who introduced me to the result of Watson and highlighted the possible role of equidistribution. I also want to thank Professor Deshouillers for a very interesting discussion following his talk at the SCHOLAR II conference\footnote{held in 2024 at the Fields Institute in Toronto, CA.}, where he mentioned the open question in the article of Bergelson and Richter. This research was supported in part by the Centre de recherche math\'ematique (CRM).

\section{Proof of Theorem \ref{S1thm:totallyhomog}}\label{S2}

An important observation about the floor function is that $\floor{x}\equiv a$ (mod $D$) occurs only when $x$ is of the form $x=Dm+a+t$ with $m\in\Z, t\in[0,1)$, which is equivalent to $a/D\leq\{x/D\}<(a+1)/D$. Under the assumption that $\{x/D\}$ is uniformly distributed in the interval $[0,1)$, we find that $\floor{x}\equiv a$ (mod $D$) happens with probability $1/D$. Applying this logic to totally-homogeneously equidistributed sequences, we recover the following.

\begin{proposition}\label{S2prop:arithmeticequid}
    Let $k\in\N$ and let $\bfx_n:=\big(x_1(n),...,x_k(n)\big)$ denote a sequence in $\R^k$ which is totally-homogeneously equidistributed mod 1. Then, the sequence $(\bfy_n)_{n\in\N}$ in $\Z^{k+1}$ given by
    \begin{equation*}
        \bfy_n:=(n,\floor{x_1(n)},...\floor{x_k(n)})
    \end{equation*}
    has the property that, for all $D\in\N$ and $\bfc\in\Z^{k+1}$ we have
    \begin{equation*}
        \densityofnwith{\bfy_n\equiv\bfc\textnormal{ (mod }D)}=\frac{1}{D^{k+1}}.
    \end{equation*}
\end{proposition}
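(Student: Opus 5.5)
Fix $D\in\N$ and $\bfc=(c_0,c_1,\dots,c_k)\in\Z^{k+1}$. The first coordinate condition $n\equiv c_0 \pmod D$ restricts $n$ to one residue class. Let $b\in\{1,\dots,D\}$ be the representative with $b\equiv c_0$ (mod $D$), and write $n=Dm+b$ with $m\ge 0$. The plan is to reduce the remaining $k$ congruences to a box condition on the rescaled sequence $\frac1D\bfx_{Dm+b}$. Total-homogeneous equidistribution then gives the count in each residue class, and we convert back to density in $n$.

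\emph{Translate the congruences.} By the observation preceding the proposition, for real $x$ and integer $a$ with $0\le a<D$,
$$\floor{x}\equiv a \pmod D \iff \frac aD\le \fracpart{x/D}<\frac{a+1}D.$$
Reduce each $c_j$ ($1\le j\le k$) to $a_j\in\{0,\dots,D-1\}$. The condition $\floor{x_j(n)}\equiv c_j$ (mod $D$) then becomes $\fracpart{x_j(n)/D}\in[a_j/D,(a_j+1)/D)$. Hence, for $n=Dm+b$,
$$\bfy_n\equiv \bfc \pmod D \iff \Big\{\tfrac1D \bfx_{Dm+b}\Big\}\in \Bcal:=\prod_{j=1}^k\Big[\tfrac{a_j}D,\tfrac{a_j+1}D\Big),$$
and $\Bcal$ is a box of volume $D^{-k}$.

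\emph{Apply the hypothesis.} Total-homogeneous equidistribution says $(\frac1D\bfx_{Dm+b})_{m\in\N}$ is equidistributed mod 1. So the set $M$ of $m$ with $\{\frac1D\bfx_{Dm+b}\}\in\Bcal$ has density $D^{-k}$. The term $m=0$ (the integer $n=b$) is a single element and does not affect densities.

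\emph{Convert back to density in $n$.} The set in question is $\{Dm+b: m\in M\}$. Its count up to $N$ equals $|M\cap[1,(N-b)/D]|$ plus $O(1)$, which is $\frac ND\cdot D^{-k}+o(N)$. Dividing by $N$ gives the limit $D^{-(k+1)}$.

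I expect no real obstacle. The only places needing care are bookkeeping: choosing the representative $b\in\N$ so the definition (which requires $b\in\N$) applies, handling the shift from $m\in\N$ versus $m\ge0$, and checking that the fractional-part equivalence uses the half-open intervals consistent with the box definition of equidistribution.
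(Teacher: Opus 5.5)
Your proposal is correct and follows essentially the same route as the paper: restrict to $n=Dm+b$ with $b\in\{1,\dots,D\}$, rewrite each floor congruence as the box condition $\{\frac{1}{D}\bfx_{Dm+b}\}\in\prod_j[a_j/D,(a_j+1)/D)$, apply total-homogeneous equidistribution, and rescale the count by $1/D$. Your handling of the $m=0$ term and the $O(1)$ error in converting back to density in $n$ is slightly more careful than the paper's version.
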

\begin{proof}
    Let $\bfc:=(c_0,c_1,...,c_k)$ and, without loss of generality, assume that $c_0\in\{1,...,D\}$ and $c_i\in\{0,...,D-1\}$ for $i=1,...,k$. In particular, $\bfy_n\equiv \bfc$ (mod $D$) implies that $n$ is of the form $n=Dm+c_0$ for some $m\in\N\cup\{0\}$. Hence, to count those $n\in\{1,...,N\}$ with $\bfy_n\equiv\bfc$ (mod $D$), we may simply count those $m\in\{0,...,\floor{(N-c_0)/D}\}$ with
    $$\fracpart{\frac{x_1(Dm+c_0)}{D}}\in\left[\frac{c_1}{D},\frac{c_1+1}{D}\right),\quad ...,\quad \fracpart{\frac{x_k(Dm+c_0)}{D}}\in\left[\frac{c_k}{D},\frac{c_k+1}{D}\right).$$
    Letting $\Bcal:=[c_1/D, (c_1+1)/D)\times\cdots\times [c_k/D, (c_k+1)/D)$ and using the fact that $\big(\frac{1}{D}\bfx_{Dm+c_0}\big)_{m\in\N}$ is equidistributed mod 1, we find
    $$\lim_{N\to\infty}\frac{\left|\left\{m\in\{0,...,\floor{(N-c_0)/D}\}:\{\frac{1}{D}\bfx_{Dm+c_0}\}\in\Bcal\right\}\right|}{\floor{(N-c_0)/D}}=\frac{1}{D^k},$$
    and we deduce that 
    $$\densityofnwith{\bfy_n\equiv\bfc\textnormal{ (mod }D)}=\frac{1}{D^k}\left(\lim_{N\to\infty}\frac{\floor{(N-c_0)/D}}{N}\right)=\frac{1}{D^{k+1}}.$$
    This concludes the proof.
\end{proof}

Given a set $\Acal\subseteq\N$, we may define its \textit{upper} and \textit{lower densities} as
\begin{equation*}
    \upperdens\,\Acal:=\limsup_{N\to\infty}\frac{|\Acal\cap\{1,...,N\}|}{N},\quad \lowerdens\,\Acal:=\liminf_{N\to\infty}\frac{|\Acal\cap\{1,...,N\}|}{N},
\end{equation*}
respectively. Note that $\upperdens\,\Acal$ and $\lowerdens\,\Acal$ are well defined for any $\Acal\subseteq\N$, and $\dens\,\Acal$ exists if and only if $\upperdens\,\Acal=\lowerdens\,\Acal$. We make the following observations regarding descending chains of sets and their natural densities.
\begin{proposition}\label{S2prop:densityofchains}
    Let $\big(\Acal_z\big)_{z\in\N}$ be a sequence of subsets of $\N$ such that
    \begin{enumerate}
        \item $\Acal_z\supseteq \Acal_{z+1}$ for all $z\in\N$,
        \item $\dens\,\Acal_z$ exists for all $z\in\N$,
        \item the set $\Acal:=\cap_{z\in\N}\Acal_z$ satisfies
        $$\lim_{z\to\infty}\upperdens(\Acal_z\setminus\Acal)=0.$$
    \end{enumerate}
    Then, $\dens\,\Acal$ exists and is equal to $\displaystyle \lim_{z\to\infty}\dens\,\Acal_z$.
\end{proposition}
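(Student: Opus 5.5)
The plan is to squeeze the counting function of $\Acal$ between those of $\Acal_z$ and $\Acal_z\setminus\Acal$, and then let $z\to\infty$. We first show that $L:=\lim_{z\to\infty}\dens\,\Acal_z$ exists. By hypothesis (1), $\Acal_{z+1}\subseteq\Acal_z$, so $|\Acal_{z+1}\cap\{1,...,N\}|\leq|\Acal_z\cap\{1,...,N\}|$ for every $N$. Dividing by $N$ and letting $N\to\infty$, hypothesis (2) gives $\dens\,\Acal_{z+1}\leq\dens\,\Acal_z$. The sequence $(\dens\,\Acal_z)_z$ is therefore non-increasing and bounded below by $0$, so $L$ exists.

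Next, fix $z$. Since $\Acal\subseteq\Acal_z$, for every $N$ we have the exact identity
$$|\Acal\cap\{1,...,N\}|=|\Acal_z\cap\{1,...,N\}|-|(\Acal_z\setminus\Acal)\cap\{1,...,N\}|.$$
Dividing by $N$ gives two bounds. Taking $\limsup_{N\to\infty}$ and using $\Acal\subseteq\Acal_z$ yields $\upperdens\,\Acal\leq\dens\,\Acal_z$. Taking $\liminf_{N\to\infty}$, and using $\liminf(a_N-b_N)\geq\lim a_N-\limsup b_N$ (valid because $\lim a_N=\dens\,\Acal_z$ exists by (2)), yields
$$\lowerdens\,\Acal\geq\dens\,\Acal_z-\upperdens(\Acal_z\setminus\Acal).$$

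Finally, let $z\to\infty$ in both inequalities. By hypothesis (3), $\upperdens(\Acal_z\setminus\Acal)\to0$, so we obtain $L\leq\lowerdens\,\Acal\leq\upperdens\,\Acal\leq L$. Hence $\dens\,\Acal$ exists and equals $L$.

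The argument is elementary, and the only step requiring care is the liminf inequality. There one must subtract an upper density from an actual limit; doing it the other way round would not work, since densities are not additive without existence. Hypothesis (2) is used precisely to guarantee that this limit exists.
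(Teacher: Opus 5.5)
Your proof is correct and follows the paper's argument exactly: monotonicity gives the existence of the limit, $\Acal\subseteq\Acal_z$ gives $\upperdens\,\Acal\le\dens\,\Acal_z$, and the inequality $\lowerdens(\Bcal\setminus\Ccal)\ge\lowerdens\,\Bcal-\upperdens\,\Ccal$ gives the matching lower bound before letting $z\to\infty$. One small quibble with your closing remark: the liminf inequality holds without hypothesis (2); what (2) actually provides is $\lowerdens\,\Acal_z=\upperdens\,\Acal_z$, so that your upper and lower bounds tend to the same limit.
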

\begin{proof}
    Note that $(\dens\,\Acal_z)_{z\in\N}$ is a decreasing sequence in $[0,1]$, so it necessarily converges.
    Since $\Acal\subseteq\Acal_z$, we necessarily have 
    $$\upperdens\,\Acal\leq \upperdens\,\Acal_z=\dens\,\Acal_z$$
    for all $z\in\N$, and taking the limit as $z\to\infty$ gives $\upperdens\,\Acal\leq \displaystyle \lim_{z\to\infty}\dens(\Acal_z)$. Similarly, if we apply the general inequality
    $$\lowerdens(\Bcal\setminus\Ccal)\geq \lowerdens\,\Bcal-\upperdens\,\Ccal\quad(\Ccal\subseteq\Bcal\subseteq\N)$$
    to $\Acal=\Acal_z\setminus(\Acal_z\setminus\Acal)$, we obtain
    $$\lowerdens\,\Acal\geq \lowerdens\,\Acal_z-\upperdens(\Acal_z\setminus\Acal)=\dens\,\Acal_z-\upperdens(\Acal_z\setminus\Acal).$$
    Again taking the limit as $z\to\infty$, we obtain $\lowerdens\,\Acal\geq \displaystyle \lim_{z\to\infty}\dens\,\Acal_z$, which concludes the proof.
\end{proof}

Using this principle, we may reduce the proof of Theorem \ref{S1thm:totallyhomog} to that of the following lemma.

\begin{lemma}\label{S2lem:upperdensityvanish}
    For any $\alpha,\beta\in\R$ with $\alpha$ irrational, we have
    $$\lim_{z\to\infty}\upperdens\big(\{n\in\N:\exists\textnormal{ a prime }p>z \textnormal{ such that }p\mid\gcd(n,\floor{\alpha n+\beta})\}\big)=0.$$
\end{lemma}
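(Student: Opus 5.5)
The plan is to write $n=pm$ and read off the constraint on $m$. For a prime $p$, the conditions $p\mid n$ and $p\mid\floor{\alpha n+\beta}$ say that $n=pm$ and $\floor{\alpha pm+\beta}=pl$ for some $l\in\Z$. Equivalently,
$$0\le \alpha pm+\beta-pl<1,\qquad\text{i.e.}\qquad l-\alpha m\in\Big(\tfrac{\beta-1}{p},\tfrac{\beta}{p}\Big].$$
In particular $\|\alpha m\|<C/p$ with $C:=1+|\beta|$. For $z$ fixed and $N\to\infty$, I need to bound the number $E_z(N)$ of $n\le N$ admitting such a representation with $p>z$, and show that $\limsup_N E_z(N)/N\to0$ as $z\to\infty$. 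I would split the primes into a bounded range $z<p\le P_0$ and a large range $p>P_0$, where $P_0=P_0(z)$ is chosen later.

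\emph{Bounded range.} For each fixed prime $p$, the condition is $\{\alpha m+(1-\beta)/p\}<1/p$ up to an endpoint, since $\alpha$ is irrational. Equidistribution of $(\alpha m)$ mod $1$ (Weyl, or the case $k=1$ of Proposition \ref{S2prop:arithmeticequid}) then shows that the set of such $n=pm$ has density $1/p^2$. Summing over $z<p\le P_0$ bounds the upper density of this part by $\sum_{p>z}p^{-2}\le 1/z$.

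\emph{Large range.} This is where irrationality has to be used uniformly, without exponential sums. Let $a/q$ be a convergent of $\alpha$, with $\eta:=\alpha-a/q$ and $q\le Q<q'$ in the usual way, where $Q$ is tied to the size of $p$. A standard counting argument handles the $m$ with $\|\alpha m\|<C/p$ and $m\le N/p$. Such $m$ come in clusters spaced by $q$: either $m$ lies in a residue class forcing $\|\alpha m\|\ge 1/(2q)$, which is incompatible with $p$ large, or $m=jq+r$ with $r$ in a bounded set. I would first treat the main case $m=jq$, $l=ja$. There the constraint becomes $-jq\eta\in\big((\beta-1)/p,\beta/p\big]$, so $n=pjq$ satisfies $n|\eta|\le C$. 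All such $n$ are multiples of $q$, and they lie in $[1,\min(N,C/|\eta|)]$, which gives at most $N/q$ values. The remaining residue classes are handled the same way: each class contributes $n$ confined to a window of length $O(1/|q\eta|)$ and of density $O(1/q)$. The residue classes with $\|r\alpha\|\gg 1/q$ require $p\ll q$, and there the crude bound $\#\{m\le M:\|\alpha m\|<\delta\}\ll M\delta+M/q+1$ applies. Summed against $\pi(2P)$ over dyadic $P$ and $M$ with $MP\le N$, this gives $O\big(N/(z\log z)+N/\log N\big)$ plus terms of size $N/q$.

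\emph{Closing the argument.} Since $\alpha$ is irrational, the denominators $q$ relevant to primes $p>P_0$ tend to infinity as $P_0\to\infty$. Choosing $P_0$ large makes every error term $O(N/q)$ at most $\varepsilon N$.

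The main obstacle is the large range when $\alpha$ is Liouville. A single convergent denominator $q$ then governs a huge range of primes, $q\ll p\ll q'$, and a naive dyadic sum loses a factor like $\log(\log q'/\log q)$. The plan is to avoid summing over dyadic $p$ there. Instead, I would observe directly, as above, that every offending $n$ in that range is (up to a bounded number of residue shifts) a multiple of $q$. This gives the bound $O(N/q)$ for the whole range at once, rather than a dyadic sum.
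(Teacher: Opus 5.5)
Your route differs from the paper's and can be completed, but the large-prime step has a gap exactly where the choice of $Q$ is left vague. Two of your claims pull $Q$ in opposite directions:
\begin{itemize}
\item The cluster picture (only $r=0$, or a bounded set of $r$, survives, so the offending $n$ are multiples of $q$ up to bounded shifts) needs $m|\eta|\lesssim 1/q$ for \emph{every} $m\le N/p$. That means the next denominator must satisfy $q'\gtrsim N/p$.
\item Your closing step needs $q$ to grow with $p$, i.e.\ $Q\approx p$.
\end{itemize}
In the range $q\ll p\ll q'$ that you single out, nothing forces $q'\gtrsim N/p$. As soon as $N\gg qq'$ there are such primes with $pq'\ll N$. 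For them $m$ runs far beyond $q'$, and $m\eta$ sweeps through about $N/(pq')$ multiples of $1/q$ (through whole periods once $N/p\gg qq'$). So $\|\alpha m\|<C/p$ is met in an unbounded number of residue classes mod $q$, and the claim in your last paragraph is false there.

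Choosing $Q\approx N/p$ instead would restore the cluster picture. But then large $p$ comes with small $q$ (for $p$ near $N$, both $m$ and $q$ are bounded), and ``$q\to\infty$ as $P_0\to\infty$'' fails. The sentence giving the remaining classes ``density $O(1/q)$'' is also not enough. For $r\neq 0$ the offending $n=p(jq+r)$ lie in the class $pr \bmod q$, which moves with $p$. So, unlike the case $r=0$, they cannot be counted for the whole range of primes at once.

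What is missing is a case split that reconciles the two requirements. Let $q=q(p)$ be the largest convergent denominator $\le p/(2C)$; it is nondecreasing in $p$ and tends to infinity. Let $q'$ be the next denominator.
\begin{itemize}
\item If $q'\ge 2N/p$, then $m|\eta|<1/(2q)$ for all $m\le N/p$. Every nonzero class mod $q$ then has $\|\alpha m\|>1/(2q)\ge C/p$, so all offending $n$ are multiples of $q$, with no shifted classes. Over all $p>P_0$ these lie in $\bigcup_{q_k\ge q(P_0)}q_k\N$, which has upper density $\ll 1/q(P_0)$ by the geometric growth of the $q_k$.
\item If $q'<2N/p$, use the bound $(M/q'+1)(q'\delta+1)$ with $M=N/p$ and $\delta=C/p$. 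Since $p/(2C)<q'<2N/p$, this is $\ll N/p^2+1$, which sums to $\ll N/P_0+\pi(N)$.
\end{itemize}
With this, your argument is a valid alternative.

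The paper avoids any per-prime choice of convergent. One Dirichlet approximation $a/q$ with $q=q(N)\le N^{1/2}$ gives $\gcd(n,\floor{\alpha n+\beta})\le|q\floor{\alpha n+\beta}-an|\ll N^{1/2}$ whenever $q\nmid n$. So large primes only see the progression $q(N)\N$, whose density is $\ll 1/q(N)\to0$ as $N\to\infty$. Primes $p\ll N^{1/2}$ are handled by the Pick's theorem bound $|\Gcal_d(N)|\ll N/d^2+1$, which needs no Diophantine input. Your version trades Pick's theorem for the classical $\|\alpha m\|$-count with a convergent chosen per prime. It gets the smallness of the $N/q$ terms from $P_0\to\infty$ rather than from $N\to\infty$.
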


Lemma \ref{S2lem:upperdensityvanish} is proven more generally in Section \ref{S4} (see Lemma \ref{S4lem:upperdens}). First, we quickly show how Lemma \ref{S2lem:upperdensityvanish} directly implies our main result.

\begin{proof}[Proof of Theorem \ref{S1thm:totallyhomog}]
    We define
    \begin{align*}
        \Acal_z&:=\{n\in\N:\forall\textnormal{ prime }p\leq z, \textnormal{ we have } p\nmid\gcd(n,\floor{f_1(n)},...,\floor{f_k(n)})\}
    \end{align*}
    for each $z\in\N$. Let $D_z:=\prod_{p\leq z}p$ and consider the set 
    $$\Ccal_z:=\{\bfc\in\{0,...,D_z-1\}^{k+1}:\forall \textnormal{ prime }p\leq z, \quad\bfc\not\equiv {\bf 0}\textnormal{ (mod }p)\},$$
    which, by the Chinese Remainder Theorem, has cardinality
    $$|\Ccal_z|=\prod_{p\mid D_z}|\{\bfc\in\{0,...,p-1\}^{k+1}:\bfc\neq{\bf 0}\}|=\prod_{p\leq z}\big(p^{k+1}-1\big).$$
    Letting $\bfy_n:=\big(n,\floor{f_1(n)},...\floor{f_k(n)}\big)$, we find
    $$\Acal_z=\{n\in\N:\forall\textnormal{ prime }p\leq z, \quad\bfy_n\not\equiv {\bf 0}\textnormal{ (mod }p)\}=\bigcup_{\bfc\in\Ccal_z}\{n\in\N:\bfy_n\equiv \bfc \textnormal{ (mod }D_z)\}$$
    where this last union is disjoint, and it follows from Proposition \ref{S2prop:arithmeticequid} that
    $$\dens\,\Acal_z=\sum_{\bfc \in\Ccal_z}\dens\big(\{n\in\N:\bfy_n\equiv\bfc\textnormal{ (mod }D_z)\}\big)=\frac{|\Ccal_z|}{D_z^{k+1}}=\prod_{p\leq z}\left(1-\frac{1}{p^{k+1}}\right).$$
    Now, observe that the sets $\Acal_z$ $(z\in\N)$ form a descending chain with
    $$\Acal:=\bigcap_{z\in\N}\Acal_z=\left\{n\in\N:\gcd\big(n,\floor{f_1(n)},...,\floor{f_k(n)}\big)=1\right\},$$
    and also they satisfy
    \begin{align*}
        \Acal_z\setminus\Acal\subseteq&\{n\in\N:\exists\textnormal{ a prime }p>z \textnormal{ such that }p\mid\gcd(n,\floor{f_1(n)},...,\floor{f_k(n)})\}\\
        \subseteq&\{n\in\N:\exists\textnormal{ a prime }p>z \textnormal{ such that }p\mid\gcd(n,\floor{f_1(n)})\}.
    \end{align*}
    Knowing that $f_1(n)=\alpha n+\beta$ for some $\alpha,\beta\in\R$ with $\alpha$ irrational, Lemma \ref{S2lem:upperdensityvanish} guarantees that
    $$\lim_{z\to\infty}\upperdens\big(\Acal_z\setminus\Acal\big)=0.$$
    By Proposition \ref{S2prop:densityofchains}, we find that $\dens\,\Acal$ exists and is equal to 
    $$\dens\,\Acal=\lim_{z\to\infty}\dens\,\Acal_z=\lim_{z\to\infty}\prod_{p\leq z}\left(1-\frac{1}{p^{k+1}}\right)=\frac{1}{\zeta(k+1)},$$
    which concludes the proof.
\end{proof}

\section{Counting lattice points in convex regions of the plane}\label{S3}

For the sake of Section \ref{S4}, we obtain some preliminary bounds on the number of lattice points contained in a convex region of $\R^2$ depending on its area. In particulat, working in dimension $2$ allows for the use of Pick's Theorem, which we recall below.

\begin{theorem}[Pick's Theorem]\label{S3thm:Picks}
    Let $\Pcal\subseteq\R^2$ be a closed polygonal region of $\R^2$ whose area is non-zero and whose vertices lie on $\Z^2$. Then
    $$\Area(\Pcal)=P_0+\frac{1}{2}P_1-1$$
    where $P_0$ is the number of integer points lying in the interior of $\Pcal$, and $P_1$ is the number of integer points lying on the boundary of $\Pcal$.
\end{theorem}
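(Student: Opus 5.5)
The plan is the classical additivity argument. Here \emph{closed polygonal region} is read as a simple polygon, i.e.\ bounded by a single non-self-intersecting closed polygonal curve; this is the setting in which the formula holds and in which it is used later. For a lattice polygon $\Pcal$, define the functional
$$\Phi(\Pcal):=P_0+\tfrac12 P_1-1.$$
The goal is to show that $\Phi$ behaves additively under gluing and agrees with the area on the smallest building blocks. The result then follows by decomposing $\Pcal$.

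\textbf{Step 1 (additivity).} Let $\Pcal=\Pcal_1\cup\Pcal_2$, where $\Pcal_1,\Pcal_2$ are lattice polygons with disjoint interiors that meet along a common polygonal edge containing $k\geq 2$ lattice points, endpoints included. The two endpoints remain on the boundary of $\Pcal$, and the other $k-2$ points become interior. Hence
$$P_0=P_0^{(1)}+P_0^{(2)}+(k-2),\qquad P_1=P_1^{(1)}+P_1^{(2)}-2(k-2)-2.$$
Substituting gives $\Phi(\Pcal)=\Phi(\Pcal_1)+\Phi(\Pcal_2)$. Area is also additive, so it suffices to prove the formula for the pieces of a suitable decomposition.

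\textbf{Step 2 (decomposition).} Every simple polygon with at least four vertices has an interior diagonal joining two of its vertices (the ear/two-ears argument). Its endpoints are lattice points, so cutting along it yields two smaller lattice polygons glued along one edge. By induction on the number of vertices, $\Pcal$ is a glued union of lattice triangles. Next, call a lattice triangle \emph{primitive} if its only lattice points are its three vertices. A non-primitive triangle $T$ can be split as follows:
\begin{itemize}
\item if $T$ has an interior lattice point, join it to the three vertices, giving three triangles;
\item if $T$ has a lattice point in the relative interior of an edge, join it to the opposite vertex, giving two triangles.
\end{itemize}
Each split is a gluing along edges, and the number of lattice points in each piece strictly decreases. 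So the process terminates with a decomposition into primitive triangles, and Step 1 applies at each stage.

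\textbf{Step 3 (primitive triangles have area $\tfrac12$).} This is the step I expect to be the main obstacle. For a primitive triangle, $\Phi=0+\tfrac32-1=\tfrac12$, so it suffices to show its area is $\tfrac12$, i.e.\ $|\det(u,v)|=1$, where, after translating a vertex to the origin, $u,v$ are the two edge vectors.
\begin{itemize}
\item The parallelogram $\Pi$ spanned by $u,v$ is the union of the triangle and its image under the point reflection through the midpoint of the edge from $u$ to $v$. That reflection maps $\Z^2$ to itself, so $\Pi$ contains no lattice points other than its four vertices.
\item The half-open parallelogram $\{su+tv: s,t\in[0,1)\}$, translated by the sublattice $\Lambda:=\Z u+\Z v$, tiles $\R^2$. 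Its only lattice point is $0$, so every $w\in\Z^2$ lies in $\Lambda$.
\item Therefore $\Lambda=\Z^2$. The matrix with columns $u,v$ is then invertible over $\Z$, so $\det(u,v)=\pm1$.
\end{itemize}
Hence the area of a primitive triangle is $\tfrac12$, which equals $\Phi$.

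Combining Steps 1–3, $\Area(\Pcal)=\Phi(\Pcal)=P_0+\tfrac12P_1-1$.
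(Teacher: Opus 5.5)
The paper gives no proof of this statement. Pick's Theorem is quoted as a classical result and used only once, in Corollary~\ref{S3cor:ptinconvex}, where it is applied to $\Pcal$, the convex hull of $\Ccal\cap\Z^2$. Your argument is the standard proof: additivity of $P_0+\frac12P_1-1$ under gluing, reduction to primitive triangles, and $\Z u+\Z v=\Z^2$ for a primitive triangle via the half-open parallelogram tiling. It is correct apart from one wording fix.

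In Step 1 you state additivity for pieces meeting along a single edge. However, your split of a triangle $ABC$ at an interior lattice point $x$ cannot be rebuilt by single-segment gluings alone. After gluing $ABx$ and $BCx$ along $Bx$, the third triangle $CAx$ meets the quadrilateral $ABCx$ along the two-segment path $Cx\cup xA$. Your lattice-point count goes through verbatim if Step 1 allows a common boundary path whose two endpoints stay on the boundary of the union and whose relative interior lies in the interior of the union. State it that way.

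Your reading of ``closed polygonal region'' as a simple polygon is the right one. For a region with $h$ holes the correct identity has an extra $+h$ on the right, so the statement as written needs this restriction. Since the paper only applies the theorem to a convex $\Pcal$, you could also replace the two-ears lemma by a fan triangulation from one vertex; every cut is then a single diagonal. Compared with the paper, which simply relies on the reader knowing the theorem, your route makes Section~\ref{S3} self-contained.
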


In our case, we are particularly interested by the following corollary of Pick's Theorem.

\begin{corollary}\label{S3cor:ptinconvex}
    Let $\Ccal$ be a bounded convex set in $\R^2$ and let $\Lambda$ be a lattice in $\R^2$. Then, either $\Ccal\cap\Lambda$ is entirely contained in an affine line, or
    $$|\Ccal\cap\Lambda|\leq 2\,\frac{\Area(\Ccal)}{\det\Lambda}+2.$$
\end{corollary}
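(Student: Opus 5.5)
The plan is to reduce to a lattice polygon and apply Pick's Theorem (Theorem \ref{S3thm:Picks}). First normalize: choose a linear map $T\in GL_2(\R)$ with $T(\Z^2)=\Lambda$. Then $|\det T|=\det\Lambda$, and $T^{-1}$ maps $\Ccal$ to a bounded convex set of area $\Area(\Ccal)/\det\Lambda$. Since invertible linear maps preserve convexity, collinearity and lattice-point counts, we may assume $\Lambda=\Z^2$. We then need to show that either $\Ccal\cap\Z^2$ lies on a line, or $|\Ccal\cap\Z^2|\leq 2\Area(\Ccal)+2$.

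Assume the points of $S:=\Ccal\cap\Z^2$ are not collinear. Since $\Ccal$ is bounded, $S$ is finite. Let $\Pcal$ be the convex hull of $S$. It is a closed polygon with vertices in $\Z^2$, and it has nonzero area because $S$ is not contained in a line. By convexity, $\Pcal\subseteq\Ccal$, so $\Area(\Pcal)\leq\Area(\Ccal)$. Moreover, every lattice point of $\Pcal$ lies in $\Ccal$, hence in $S$, and conversely $S\subseteq\Pcal$. So $|S|=P_0+P_1$, where $P_0,P_1$ count the interior and boundary lattice points of $\Pcal$.

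Pick's Theorem gives $\Area(\Pcal)=P_0+\tfrac12P_1-1$, hence
\[
2\Area(\Pcal)+2 = 2P_0+P_1 \geq P_0+P_1 = |S|.
\]
Combining this with $\Area(\Pcal)\leq\Area(\Ccal)$ gives the bound.

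One point needs care. If $\Ccal$ is not closed, its area should be read as Lebesgue measure. Since $\Pcal\subseteq\Ccal$, monotonicity of measure still applies, so openness of $\Ccal$ does not matter. There is no real obstacle here; the only step to check is that $\Pcal$ is a genuine polygon with positive area, which is exactly where the non-collinearity hypothesis is used.
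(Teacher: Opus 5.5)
Your proposal is correct and matches the paper's proof: take the convex hull $\Pcal$ of $\Ccal\cap\Z^2$, note that $\Pcal\subseteq\Ccal$ and $P_0+P_1=|\Ccal\cap\Z^2|$, apply Pick's Theorem, and use a linear map $T$ with $T\Z^2=\Lambda$ to pass to a general lattice. The only difference is ordering: the paper proves the $\Z^2$ case first and transfers afterwards, while you normalize to $\Z^2$ at the start.
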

\begin{proof}
    We first prove the case $\Lambda=\Z^2$. Let $\Pcal\subseteq\R^2$ denote the convex hull of $\Ccal\cap\Z^2$, that is
    $$\Pcal=\left\{\sum_{\bfx\in\Ccal\cap\Z^2}t_{\bfx}\bfx\in\R^2:t_{\bfx}\in[0,1]\quad (\bfx\in\Ccal\cap\Z^2),\quad \sum_{\bfx\in\Ccal\cap\Z^2}t_{\bfx}= 1\right\}.$$
    Assuming that $\Ccal\cap\Z^2$ is not contained in a line, we may find three points $\bfx_1,\bfx_2,\bfx_3\in\Ccal\cap\Z^2$ which are not collinear. Since $\Pcal$ is convex, it contains the triangle formed by $\bfx_1,\bfx_2,\bfx_3$ which has a non-zero area, which in turn gives $\Area(\Pcal)>0$. Moreover, since $\Ccal\cap\Z^2$ is finite, we find that $\Pcal$ is a closed polygonal region of $\R^2$ with vertices taken from $\Ccal\cap\Z^2$.\\
    \begin{figure}[hbt!]
        \centering
        \label{fig:constructionofP}
        \begin{tikzpicture}
            \draw[white] (0,5) rectangle (14,0);

            \draw[color=black] (3.5,2.5) circle [x radius=2, y radius=1, rotate=30];
            \draw (5.5,4) node {$\Ccal$};

            \foreach \x in {0,...,10}{
                \foreach \y in {0,...,8}
            \draw (1.1+0.5*\x,0.3+0.5*\y) node {$\cdot$};
            }

            \draw (7, 2.5) node {\Large$\longrightarrow$};

            \draw[color=black] (10.5,2.5) circle [x radius=2, y radius=1, rotate=30];
            \draw (12.5,4) node {$\Ccal$};

            \draw (9.1,1.3) -- (9.1, 2.3);
            \draw (9.1,2.3) -- (10.1, 3.3);
            \draw (10.1, 3.3) -- (11.6, 3.8);
            \draw (11.6, 3.8) -- (12.1, 3.3);
            \draw (12.1, 3.3) -- (12.1, 2.8);
            \draw (12.1, 2.8) -- (11.1,1.8);
            \draw (11.1,1.8) -- (10.1,1.3);
            \draw (10.1, 1.3) -- (9.1, 1.3);
            
            \draw (11.8,3) node {$\Pcal$};
            
            \draw[red] (9.1,1.3) node {$\cdot$};
            \draw[red] (9.1,1.8) node {$\cdot$};
            \draw[red] (9.1,2.3) node {$\cdot$};
            
            \draw[red] (9.6,1.3) node {$\cdot$};
            \draw[red] (9.6,1.8) node {$\cdot$};
            \draw[red] (9.6,2.3) node {$\cdot$};
            \draw[red] (9.6,2.8) node {$\cdot$};
            
            \draw[red] (10.1,1.3) node {$\cdot$};
            \draw[red] (10.1,1.8) node {$\cdot$};
            \draw[red] (10.1,2.3) node {$\cdot$};
            \draw[red] (10.1,2.8) node {$\cdot$};
            \draw[red] (10.1,3.3) node {$\cdot$};
            
            \draw[red] (10.6,1.8) node {$\cdot$};
            \draw[red] (10.6,2.3) node {$\cdot$};
            \draw[red] (10.6,2.8) node {$\cdot$};
            \draw[red] (10.6,3.3) node {$\cdot$};
            
            \draw[red] (11.1,1.8) node {$\cdot$};
            \draw[red] (11.1,2.3) node {$\cdot$};
            \draw[red] (11.1,2.8) node {$\cdot$};
            \draw[red] (11.1,3.3) node {$\cdot$};
            
            \draw[red] (11.6,2.3) node {$\cdot$};
            \draw[red] (11.6,2.8) node {$\cdot$};
            \draw[red] (11.6,3.3) node {$\cdot$};
            \draw[red] (11.6,3.8) node {$\cdot$};
            
            \draw[red] (12.1,2.8) node {$\cdot$};
            \draw[red] (12.1,3.3) node {$\cdot$};
                
        \end{tikzpicture}.
        \caption{Construction of $\Pcal$.}
    \end{figure}
    
    Since $\Ccal$ is convex, we have $\Pcal\subseteq\Ccal$ and, applying Pick's Theorem (Theorem \ref{S3thm:Picks}), we find
    $$\Area(\Ccal)\geq \Area(\Pcal)=P_0+\frac{P_1}{2}-1\geq \frac{P_0+P_1}{2}-1=\frac{|\Ccal\cap\Z^2|}{2}-1$$
    where $P_0$ and $P_1$ are defined as in Theorem $\ref{S3thm:Picks}$, and the equality $P_0+P_1=|\Ccal\cap\Z^2|$ is a direct consequence of the construction of $\Pcal$. Rearranging this last inequality gives $|\Ccal\cap\Z^2|\leq 2\,\Area(\Ccal)+2$, as desired.\\

    We now prove the general case $\Lambda=T\Z^2$ where $T$ is some arbitrary $2\times2$ invertible matrix with real coefficients. Letting $\Ccal':=T^{-1}\Ccal$, we find that $T$ yields a bijection between $\Ccal'\cap\Z^2$ and $\Ccal\cap\Lambda$. If $\Ccal'\cap\Z^2$ is contained in some affine line $\Lcal\subseteq\R^2$, then $\Ccal\cap\Lambda$ is contained in the affine line $T\Lcal$ and we are done. Otherwise, since $\Ccal'$ is convex, we have
    $$|\Ccal\cap\Lambda|=|\Ccal'\cap\Z^2|\leq 2\,\Area(\Ccal')+2=2\,\frac{\Area(\Ccal)}{|\det T|}+2.$$
    This concludes the proof since $\det \Lambda:=|\det T|.$
\end{proof}

We end this section by considering primitive lattice points. Given a lattice $\Lambda\subseteq\R^m$ $(m\geq1)$, we say that a point $\bfx\in\Lambda$ is \textit{primitive} in $\Lambda$ if $\bfx\neq0$ and the intersection between $\Lambda$ and the line generated by $\bfx$ is itself generated by $\bfx$ over $\Z$, which is to say that $\R\bfx\cap\Lambda=\Z\bfx$. We denote by $\Lambda^*$ the set of all primitive points in $\Lambda$. For example, if we take the lattice of integer points $\Lambda=\Z^m$ in $\R^m$, then $\Lambda^*$ contains exactly the points $(x_1,...,x_m)\in\Z^m$ with $\gcd(x_1,...,x_m)=1$.\\ 

When the convex region $\Ccal$ is symmetric and we only count the points of $\Ccal\cap\Lambda$ which are primitive, we do not have to account for the possibility that $\Ccal\cap\Lambda$ is contained in an affine line.

\begin{corollary}\label{S3cor:primitiveptinconvex}
    Let $\Ccal$ be a bounded symmetric convex set in $\R^2$ and let $\Lambda$ be a lattice in $\R^2$. Then, 
    $$|\Ccal\cap\Lambda^*|\leq 2\,\frac{\Area(\Ccal)}{\det\Lambda}+2.$$
\end{corollary}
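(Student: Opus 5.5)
We reduce to Corollary \ref{S3cor:ptinconvex}. Since $\Ccal^*\cap\Lambda^*\subseteq \Ccal\cap\Lambda$, we write $S:=\Ccal\cap\Lambda^*$. If $S$ is not contained in an affine line, we apply Corollary \ref{S3cor:ptinconvex} to $\Ccal\cap\Lambda$ and obtain
$$|\Ccal\cap\Lambda^*|\leq|\Ccal\cap\Lambda|\leq 2\,\frac{\Area(\Ccal)}{\det\Lambda}+2.$$
The same holds whenever $\Ccal\cap\Lambda$ itself is not contained in an affine line. So the only case requiring work is when all points of $\Ccal\cap\Lambda$, hence all of $S$, lie on a single affine line $\Lcal$.

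In this degenerate case, we use the symmetry of $\Ccal$ (symmetric about the origin, so $\bfx\in\Ccal\Rightarrow-\bfx\in\Ccal$). Since $\Lambda=-\Lambda$ and $\Lambda^*=-\Lambda^*$, the set $S$ is symmetric as well. Suppose $S$ is nonempty and pick $\bfx\in S$. Then $-\bfx\in S\subseteq\Lcal$, and $\bfx\neq 0$, so $\Lcal$ passes through the midpoint $0$ of $\bfx$ and $-\bfx$. Hence $\Lcal=\R\bfx$ is a line through the origin. By primitivity of $\bfx$, we have $\Lcal\cap\Lambda=\R\bfx\cap\Lambda=\Z\bfx$. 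The primitive points of $\Lambda$ in $\Z\bfx$ are exactly $\pm\bfx$, because $m\bfx$ with $|m|\geq2$ satisfies $\R(m\bfx)\cap\Lambda=\Z\bfx\neq\Z m\bfx$. Therefore $|S|\leq 2\leq 2\,\Area(\Ccal)/\det\Lambda+2$. If $S$ is empty, the bound is trivial.

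The only subtle point is interpreting ``symmetric'' as symmetric with respect to the origin, together with the small check that $\Lambda^*$ is closed under negation, which is immediate from $\R(-\bfx)=\R\bfx$ and $\Z(-\bfx)=\Z\bfx$. There is no real obstacle beyond organizing these cases.
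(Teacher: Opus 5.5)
Your proposal is correct and follows essentially the same route as the paper: split on whether $\Ccal\cap\Lambda$ lies on an affine line $\Lcal$, invoke Corollary \ref{S3cor:ptinconvex} otherwise, and in the degenerate case show that $\Lcal$ passes through the origin, so that primitivity leaves only $\pm\bfx$. The only cosmetic difference is how you get $0\in\Lcal$: you use the midpoint of $\bfx$ and $-\bfx$, whereas the paper uses $0\in\Ccal\cap\Lambda$. Also, the ``$\Ccal^*$'' in your first line is a harmless typo.
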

\begin{proof}
    First, we suppose that $\Ccal\cap\Lambda\subseteq\Lcal$ for some affine line $\Lcal\subseteq\R^2$. If $\Ccal=\varnothing$ we are done, and otherwise we necessarily have $0\in\Ccal$. Note that for all $\bfx\in\Ccal\cap\Lambda^*$, we have $0,\bfx\in\Lcal$, which gives $\R\bfx=\Lcal$ and
    $$\Lcal\cap\Lambda=\R\bfx\cap\Lambda=\Z\bfx.$$
    Since the group $\Z\bfx$ has exactly two generators, $\bfx$ and $-\bfx$, we find $|\Ccal\cap\Lambda^*|=2$. If we suppose instead that $\Ccal\cap\Lambda$ is not contained in a line, then Corollary \ref{S3cor:ptinconvex} gives
    $$|\Ccal\cap\Lambda^*|\leq |\Ccal\cap\Lambda|\leq 2\,\frac{\Area(\Ccal)}{\det\Lambda}+2,$$
    and we are done.
\end{proof}

\section{On the greatest common divisor of two Beatty sequences}\label{S4}
This section is dedicated to the proof of Lemma \ref{S2lem:upperdensityvanish}, where we follow a similar method to that of Estermann \cite{ESTERMANNprimitivelatticepoints}. We can however improve the quality of certain estimates and streamline the argument through the use of Pick's Theorem (in the form of Corollary \ref{S3cor:primitiveptinconvex}).\\ 

For the rest of this section, we fix $\alpha_1,\alpha_2,\beta_1,\beta_2\in\R$ and we consider the corresponding (inhomogeneous) Beatty sequences given by
$$B_i(n):=\floor{\alpha_in+\beta_i}\quad\quad (i=1,2).$$
Also, we write $X\ll Y$ to mean that there exists a constant $C>0$ depending at most on $\alpha_1,\alpha_2,\beta_1,\beta_2$ such that $|X|\leq C Y$.

\begin{lemma}\label{S4lem:gcdequald}
    Suppose that $\alpha_1\neq0$. Then, for all $d,N\in\N$, the set
    \begin{equation*}
        \Gcal_d(N):=\{n\in\{1,...,N\}:\gcd\big(B_1(n),B_2(n)\big)=d\}
    \end{equation*}
    satisfies $|\Gcal_d(N)|\ll N/d^2+1.$
\end{lemma}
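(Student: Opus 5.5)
We will bound $|\Gcal_d(N)|$ by counting primitive points of the lattice $\Lambda_d:=d\Z^2$ inside a thin symmetric parallelogram, using Corollary \ref{S3cor:primitiveptinconvex}. The key observation is that the points $\big(B_1(n),B_2(n)\big)$ lie in a strip of bounded width along a fixed line.

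\textbf{Step 1: the points lie in a thin symmetric region.} Write $B_i(n)=\alpha_i n+\beta_i-\theta_i(n)$ with $\theta_i(n)\in[0,1)$. Since $\alpha_1\neq0$, we have $n=\big(B_1(n)-\beta_1+\theta_1(n)\big)/\alpha_1$. Substituting this gives
$$B_2(n)-\frac{\alpha_2}{\alpha_1}B_1(n)=\beta_2-\frac{\alpha_2}{\alpha_1}\beta_1+O(1),$$
which is $\ll 1$ uniformly in $n$. We also have $|B_1(n)|\ll N$ for $n\leq N$. Hence there is a constant $C>0$, depending only on $\alpha_1,\alpha_2,\beta_1,\beta_2$, such that every point $\big(B_1(n),B_2(n)\big)$ with $1\le n\le N$ lies in
$$\Ccal_N:=\Big\{(x,y)\in\R^2:|x|\leq CN,\ \Big|y-\frac{\alpha_2}{\alpha_1}x\Big|\leq C\Big\}.$$
This set is a bounded, convex parallelogram, symmetric about the origin, with $\Area(\Ccal_N)=4C^2N\ll N$. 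Absorbing the constant offset into $C$ is what makes the region symmetric, which is why we can use the primitive-point version of the corollary and avoid the collinear case.

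\textbf{Step 2: primitivity.} Take $n\in\Gcal_d(N)$. Then $\big(B_1(n),B_2(n)\big)=d(u,v)$ with $\gcd(u,v)=1$; the case $(0,0)$ is excluded because its gcd is $0\neq d$. The point $d(u,v)$ is therefore primitive in $\Lambda_d=d\Z^2$, and $\det\Lambda_d=d^2$. Corollary \ref{S3cor:primitiveptinconvex} then gives
$$\big|\{(B_1(n),B_2(n)):n\in\Gcal_d(N)\}\big|\leq|\Ccal_N\cap\Lambda_d^*|\leq \frac{2\Area(\Ccal_N)}{d^2}+2\ll \frac{N}{d^2}+1.$$

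\textbf{Step 3: bounded multiplicity.} The map $n\mapsto\big(B_1(n),B_2(n)\big)$ is at most $O(1)$-to-one. Indeed, if $B_1(n)=B_1(n')$, then $|\alpha_1(n-n')|<1$, so $|n-n'|<1/|\alpha_1|$. Each fibre therefore has at most $1/|\alpha_1|+1\ll1$ elements. Combining this with Step 2 yields $|\Gcal_d(N)|\ll N/d^2+1$.

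The only delicate point is Step 1: one must choose the region so that it is simultaneously symmetric (so Corollary \ref{S3cor:primitiveptinconvex} applies), has area of order $N$ rather than $N^2$, and contains all the relevant points. Tilting the strip along the slope $\alpha_2/\alpha_1$ achieves all three; the rest is routine.
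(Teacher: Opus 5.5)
Your proof is correct and is essentially the paper's argument. The paper uses the same symmetric parallelogram, with the strip written as $|\alpha_2x-\alpha_1y|\le$ const instead of dividing by $\alpha_1$. It then applies Corollary \ref{S3cor:primitiveptinconvex} with $\Lambda=d\Z\times d\Z$ and uses the same $|\alpha_1|^{-1}+1$ bound on the fibres of $B_1$.
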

\begin{proof}
    We observe that, for all $n\in\{1,...,N\}$ we have
    \begin{align*}
        |B_1(n)|&\leq |\alpha_1|N+|\beta_1|+1,\\
        |\alpha_2B_1(n)-\alpha_1B_2(n)|&\leq|\alpha_2\beta_1-\alpha_1\beta_2|+|\alpha_1|+|\alpha_2|.
    \end{align*}
    Accordingly, we consider the parallelogram $\Ccal\subseteq\R^2$ formed by all points $(x,y)\in\R^2$ satisfying the inequalities 
    $$|x|\leq |\alpha_1|N+|\beta_1|+1,\quad |\alpha_2x-\alpha_1y|\leq |\alpha_2\beta_1-\alpha_1\beta_2|+|\alpha_1|+|\alpha_2|$$
    which has area equal to
    $$\Area(\Ccal)=\frac{4}{|\alpha_1|}\big(|\alpha_1|N+|\beta_1|+1\big)\big(|\alpha_2\beta_1-\alpha_1\beta_2|+|\alpha_1|+|\alpha_2|\big)\ll N.$$
    Applying Corollary \ref{S3cor:primitiveptinconvex} with the lattice $\Lambda:=d\Z\times d\Z$, we obtain
    $$|\{(x,y)\in\Ccal\cap\Z^2:\gcd(x,y)=d\}|=|\Ccal\cap\Lambda^*|\leq 2\,\frac{\Area(\Ccal)}{\det\Lambda}+2\ll \frac{N}{d^2}+1.$$
    To conclude the proof, note that $n\in\Gcal_d(N)$ only occurs when $\big(B_1(n),B_2(n)\big)\in\Ccal\cap\Lambda^*$. Since the equality $B_1(n)=\floor{\alpha_1n+\beta_1}=x$ has at most $|\alpha_1|^{-1}+1$ solutions $n\in\Z$ for any $x\in\Z$, we find $|\Gcal_d(N)|\leq \big(|\alpha_1|^{-1}+1\big)|\Ccal\cap\Lambda^*|\ll N/d^2+1$.
\end{proof}

\begin{remark}
    Our Lemma \ref{S4lem:gcdequald} effectively plays the same role as Lemma 2 of Estermann \cite{ESTERMANNprimitivelatticepoints}. However, using Pick's Theorem allows one to remove a term of order $\log N$ in the upper bound for $|\Gcal_d(N)|$.
\end{remark}

When $\alpha_1\neq0$, Dirichlet's Approximation Theorem guarantees that, for any $N\in\N$, there exist $q\in\N,a\in\Z$ with
\begin{equation}\label{eq:Dirichlet}
    1\leq q\leq N^{1/2},\quad \left|q(\alpha_2/\alpha_1)-a\right|\leq N^{-1/2},\quad \gcd(a,q)=1.
\end{equation}
For each $N\in\N$, we define $q(N):=q(\alpha_1,\alpha_2,N)\in\N$ as the largest integer $q$ for which \eqref{eq:Dirichlet} admits a solution $a\in\Z$. We quickly observe that $\big(q(N)\big)_{N\in\N}$ is a non-decreasing sequence and that $\displaystyle\lim_{N\to\infty}q(N)=\infty$ happens if and only if $\alpha_2/\alpha_1$ is irrational.
\begin{lemma}\label{S4lem:ddividesgcd}
    Suppose that $\alpha_1\neq0$. Then, for all $d,N\in\N$, we have
    $$|\{n\in\{1,...,N\}: q(N)\nmid B_1(n),\hfspace d\mid B_1(n), \hfspace d\mid B_2(n)\}|\ll\frac{N}{d^2}+\frac{N^{1/2}}{d}.$$
\end{lemma}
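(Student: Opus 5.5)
The plan is to mirror the proof of Lemma \ref{S4lem:gcdequald}, replacing primitive points by all points of a sublattice and using the hypothesis $q(N)\nmid B_1(n)$ to handle the degenerate, collinear configuration. Write $q=q(N)$ and let $a$ be the companion numerator from \eqref{eq:Dirichlet}. As before, every $n\le N$ yields a point $(B_1(n),B_2(n))$ in the parallelogram $\Ccal$ of area $\ll N$. Any $x\in\Z$ arises from at most $|\alpha_1|^{-1}+1$ values of $n$. So it suffices to bound the set
$$S:=\{(x,y)\in\Ccal\cap\Lambda:\ q\nmid x\},\qquad \Lambda:=d\Z\times d\Z,$$
by $\ll N/d^2+N^{1/2}/d$. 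Because $\Ccal$ is convex, Corollary \ref{S3cor:ptinconvex} applies. Either $\Ccal\cap\Lambda$ is not contained in a line, in which case $|S|\le|\Ccal\cap\Lambda|\le 2\Area(\Ccal)/d^2+2\ll N/d^2+1$. Or $\Ccal\cap\Lambda$ lies on a single affine line $\Lcal$. The constant $1$ in the first case is harmless, since we may assume $d\le N^{1/2}$ up to constants. Otherwise $\Ccal\cap\Lambda$ has $O(1)$ points and the bound is trivial, because $|x|\ll N$ and $d\mid x$ with $x\neq0$ (as $q\nmid x$) force $d\ll N$.

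The degenerate case is the heart of the matter. Suppose $S$ has $K\ge3$ points, all on $\Lcal$. Then $\Lcal$ has direction $(u,v)\in\Z^2$ primitive, and consecutive points differ by a multiple of $d(u,v)$. The $x$-coordinates span an interval of length $\ll N$, so $(K-1)d|u|\ll N$. The quantity $\alpha_2x-\alpha_1y$ is bounded by $O(1)$ on $\Ccal$, so $(K-1)d|\alpha_2u-\alpha_1v|\ll1$. We may assume $K\ge C N^{1/2}/d$ for a large constant $C$, since otherwise we are done. This gives
$$|u|\ll N^{1/2}/C \quad\text{and}\quad |u(\alpha_2/\alpha_1)-v|\ll N^{-1/2}/C.$$
Taking $C$ large, this shows that $v/u$ is a Dirichlet-quality approximation to $\alpha_2/\alpha_1$ with denominator at most $N^{1/2}$. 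Comparing with $a/q$, where $q$ is the \emph{largest} admissible denominator, the standard gap estimate
$$|v/u-a/q|\ge 1/(|u|q)$$
forces $v/u=a/q$, i.e.\ $(u,v)=\pm(q,a)$. Hence $\Lcal$ is a line $ax-qy=m$.

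It remains to locate this line. We compute
$$ax-qy=(a-q\alpha_2/\alpha_1)x+(q/\alpha_1)(\alpha_2x-\alpha_1y).$$
If $m=0$, then $q\mid ax$, and since $\gcd(a,q)=1$ this gives $q\mid x$, contradicting the definition of $S$. This is exactly where the hypothesis $q(N)\nmid B_1(n)$ enters. So $m\neq0$, and $m$ is divisible by $d$. On this line the points of $\Lambda$ are spaced by $d(q,a)$, so $K\ll N/(dq)+1$. The remaining task is to show that a nonzero offset is incompatible with $K\ge CN^{1/2}/d$ points lying in the strip $|\alpha_2x-\alpha_1y|\ll1$ over an $x$-range of length $\asymp Kdq$. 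The offset $m\neq0$ makes $\alpha_2x-\alpha_1y$ drift linearly along the line, at a rate governed by $|q\alpha_2/\alpha_1-a|$, which prevents too many points from staying inside the strip. I would try to quantify this drift against the width $O(1)$ of the strip.

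I expect this last step to be the main obstacle. It requires matching the constants in the Dirichlet approximation with the implied constants defining $\Ccal$, so that the maximality of $q(N)$ genuinely pins down the direction of $\Lcal$ and rules out long runs on a line with $m\neq0$. If this uniqueness argument proves too delicate, the fallback is to bypass Pick entirely for this case. One would foliate $\Ccal\cap\Lambda$ by the lines $ax-qy=m$ with $d\mid m$, $m\neq0$, and $|m|\ll N^{1/2}$; there are $\ll N^{1/2}/d$ such lines. One would then bound the number of points on each line using the strip width together with $q\le N^{1/2}$.
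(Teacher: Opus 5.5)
Your argument does not close, and two steps fail as written.

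First, the reduction to $d\ll N^{1/2}$ is wrong. When $d/N^{1/2}\to\infty$ the target $N/d^2+N^{1/2}/d$ tends to $0$, so showing that $\Ccal\cap\Lambda$ has $O(1)$ points proves nothing. You need $S=\varnothing$ for $d>CN^{1/2}$, and the observation $d\ll N$ gives nothing in the range $N^{1/2}\ll d\ll N$. Without this, the additive constant from Corollary \ref{S3cor:ptinconvex} is never absorbed.

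Second, you leave the degenerate case open, and drift is not what controls it. The drift along a line $ax-qy=m$ is governed by $|q\alpha_2/\alpha_1-a|$. This can be far smaller than $N^{-1/2}$, which is exactly the Liouville-type situation the paper targets. In that case such a line can stay in the strip over the whole $x$-range. Your fallback has the same problem: a per-line count $\ll N/(dq)+1$ on each of $\ll N^{1/2}/d$ lines only yields $N^{3/2}/(d^2q)+N^{1/2}/d$. That is too weak, because $q(N)$ may be much smaller than $N^{1/2}$.

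The missing idea is already contained in your identity for $ax-qy$. It gives $|ax-qy|\le N^{-1/2}|x|+(q/|\alpha_1|)\,|\alpha_2x-\alpha_1y|\ll N^{1/2}$ on all of $\Ccal$. For $(x,y)\in S$, the integer $ax-qy$ is nonzero (as $q\nmid x$ and $\gcd(a,q)=1$) and divisible by $\gcd(x,y)$. Hence $\gcd(x,y)\le CN^{1/2}$.

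This is the paper's key step. Write $\gcd(B_1(n),B_2(n))=dr$ with $r\le CN^{1/2}/d$ and sum Lemma \ref{S4lem:gcdequald} over $r$. That lemma counts primitive points via Corollary \ref{S3cor:primitiveptinconvex}, so it has no collinear exception, and the sum gives $\sum_{r}\big(N/(dr)^2+1\big)\ll N/d^2+N^{1/2}/d$ directly.

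If you want to keep your route, two repairs suffice:
\begin{enumerate}
\item The same gcd bound shows $S=\varnothing$ when $d>CN^{1/2}$, which absorbs the additive constants.
\item Your collinear case becomes trivial once you notice that $\Ccal$ is symmetric, so $0\in\Ccal\cap\Lambda\subseteq\Lcal$. The line then passes through the origin, so $\Lcal\cap\Lambda=d\Z(u,v)$. Each point $td(u,v)\in S$ has $\gcd=|t|d\le CN^{1/2}$, giving $|S|\le 2CN^{1/2}/d$.
\end{enumerate}
In particular, the case $m\neq0$ that you flagged as the main obstacle never arises, since a line through $0$ in direction $(q,a)$ has $m=0$.
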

\begin{proof}
    Write $q=q(N)$ for simplicity, and let $a\in\Z$ be an integer satisfying $\gcd\big(a,q\big)=1$ and $|q(\alpha_2/\alpha_1)-a|\leq N^{-1/2}$.  When $n\in\N$ is such that $q \nmid B_1(n)$, then $aB_1(n)\not\equiv0$ mod $q $, which gives $qB_2(n)-aB_1(n)\neq0$. In that case, we have
    \begin{align*}
        \gcd\big(B_1(n),B_2(n)\big)\leq&\,|qB_2(n)-aB_1(n)|\\
        \leq&\,\big|n(q\alpha_2-a\alpha_1)\big|+|q|+|a|\\
        \ll&\, N\cdot N^{-1/2}+N^{1/2}\\
        \ll&\, N^{1/2},
    \end{align*}
    where we used the fact that $|a|\leq |q(\alpha_2/\alpha_1)|+N^{-1/2}\ll N^{1/2}$. Hence, we have shown that there exists a constant $C>0$ depending at most on $\alpha_1,\alpha_2,\beta_1,\beta_2$ such that, when $q\nmid B_1(n)$ $(n\in\{1,...,N\})$, we have $\gcd\big(B_1(n),B_2(n)\big)\leq CN^{1/2}$. In other words, this means that $\gcd\big(B_1(n),B_2(n)\big)=dr$ for some integer $r\leq (CN^{1/2})/d$, and using Lemma \ref{S4lem:gcdequald} we find
    \begin{align*}
        |\{n\in\{1,...,N\}: q\nmid B_1(n),\hfspace d\mid B_1(n), \hfspace d\mid B_2(n)\}|
        \leq& \sum_{r\leq\frac{CN^{1/2}}{d}}|\Gcal_{dr}(N)|\\
        \ll& \sum_{r\leq\frac{CN^{1/2}}{d}}\left(\frac{N}{(dr)^2}+1\right)\\
        \ll& \frac{N}{d^2}+\frac{N^{1/2}}{d}.
    \end{align*}
    This concludes the proof.
\end{proof}

We are now ready to prove Lemma \ref{S2lem:upperdensityvanish} in a slightly more explicit form. Note that Lemma \ref{S2lem:upperdensityvanish} corresponds to the case $\alpha_1=1,\beta_1=0$ and $\alpha_2\notin\Q$.

\begin{lemma}\label{S4lem:upperdens}
    If $\alpha_1,\alpha_2$ are linearly independent over $\Q$, then
    $$\upperdens\Big(\{n\in\N:\exists\textnormal{ a prime }p>z \textnormal{ such that }p\mid B_1(n),\hfspace p\mid B_2(n)\}\Big)\ll \sum_{p>z}\frac{1}{p^2}$$
    for all $z\in\N$.
\end{lemma}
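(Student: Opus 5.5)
We bound, for fixed $z$ and large $N$, the number of $n\le N$ for which some prime $p>z$ divides both $B_1(n)$ and $B_2(n)$. We split according to whether $q:=q(N)$ divides $B_1(n)$:
$$\Big|\Big\{n\le N:\exists p>z,\ p\mid B_1(n),\ p\mid B_2(n)\Big\}\Big|\le \sum_{p>z}\big|\{n\le N: q\nmid B_1(n),\ p\mid B_1(n),\ p\mid B_2(n)\}\big| + \big|\{n\le N: q\mid B_1(n)\}\big|.$$

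For the first sum, note that a prime $p$ dividing $\gcd(B_1(n),B_2(n))$ with $q\nmid B_1(n)$ satisfies $p\le \gcd(B_1(n),B_2(n))\le CN^{1/2}$, by the bound $\gcd(B_1(n),B_2(n))\le|qB_2(n)-aB_1(n)|\ll N^{1/2}$ established in the proof of Lemma \ref{S4lem:ddividesgcd}. Hence only primes $z<p\le CN^{1/2}$ contribute. Applying Lemma \ref{S4lem:ddividesgcd} with $d=p$ gives
$$\sum_{z<p\le CN^{1/2}}\left(\frac{N}{p^2}+\frac{N^{1/2}}{p}\right)\ll N\sum_{p>z}\frac1{p^2}+N^{1/2}\log\log N.$$
After dividing by $N$, the second term vanishes as $N\to\infty$. (Even the crude bound $\sum_{p\le CN^{1/2}}1/p\ll\log N$ would suffice.)

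For the second term we count $n\le N$ with $q\mid B_1(n)=\floor{\alpha_1 n+\beta_1}$. Since $\alpha_1,\alpha_2$ are linearly independent over $\Q$, the ratio $\alpha_2/\alpha_1$ is irrational, so $q(N)\to\infty$. We must show that
$$\limsup_{N\to\infty}\frac1N\big|\{n\le N: q(N)\mid B_1(n)\}\big|=0,$$
and this is the step I expect to be the main obstacle, because $q$ varies with $N$ and is not a fixed modulus. I would proceed as follows. Since $\alpha_1\neq0$, each integer value $x=B_1(n)$ is attained by at most $|\alpha_1|^{-1}+1$ values of $n$. For $n\le N$ we have $|B_1(n)|\le|\alpha_1|N+|\beta_1|+1$, so the number of multiples $x$ of $q$ in this range is $\ll N/q+1$. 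Hence the count is $\ll N/q(N)+1=o(N)$, using only $\alpha_1\neq0$ and $q(N)\to\infty$. This is more elementary than I first feared: no equidistribution is needed, only the bounded-to-one property of the Beatty map.

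Dividing the two bounds by $N$ and letting $N\to\infty$ yields upper density $\ll\sum_{p>z}p^{-2}$, with the implied constant depending only on $\alpha_1,\alpha_2,\beta_1,\beta_2$, as claimed. The implied constant from Lemma \ref{S4lem:ddividesgcd} is uniform in $d$, which is what justifies summing it over $p$.
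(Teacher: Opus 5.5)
Your proposal is correct and follows essentially the same argument as the paper: split according to whether $q(N)\mid B_1(n)$, bound that set by $\ll N/q(N)$ using the bounded-to-one property of $B_1$, and sum Lemma~\ref{S4lem:ddividesgcd} over primes $p>z$. The only cosmetic difference is that you truncate the prime sum at $CN^{1/2}$ via the gcd bound, whereas the paper truncates at $P(N)\ll N$. Either way the leftover term is $o(N)$.
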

\begin{proof}
    We define $P(N)=\max\big\{|B_1(n)|:n\in\{1,...,N\}\big\}$ which has size $P(N)\ll N$. First, observe that
    \begin{align*}
        |\{n\in\{1,...,N\}:q(N)\mid B_1(n)\}|=&\sum_{|r|\leq \frac{P(N)}{q(N)}}|\{n\in\{1,...,N\}:B_1(n)=q(N)r\}|\\
        \leq&\left(2\,\frac{P(N)}{q(N)}+1\right)\left(\frac{1}{|\alpha_1|}+1\right)\\
        \ll& \frac{N}{q(N)}.
    \end{align*}
    
    Applying Lemma \ref{S4lem:ddividesgcd}, we get
    \begin{align*}
        &|\{n\in\{1,...,N\}:\exists\textnormal{ a prime }p>z \textnormal{ such that }p\mid B_1(n),\hfspace p\mid B_2(n)\}|\\
        \leq&|\{n\in\{1,...,N\}:q(N)\mid B_1(n)\}|\\
        &+\sum_{z<p\leq P(N)}|\{n\in\{1,...N\}:q(N)\nmid B_1(n),\hfspace p\mid B_1(n),\hfspace p\mid B_2(n)\}|\\
        \ll& \frac{N}{q(N)}+\sum_{z<p\leq P(N)}\left(\frac{N}{p^2}+\frac{N^{1/2}}{p}\right)\\
        \ll& \frac{N}{q(N)}+N\left(\sum_{p>z}\frac{1}{p^2}\right)+N^{1/2}\log\log P(N).
    \end{align*}
    To obtain the desired bound, it suffices to observe that
    $$\limsup_{N\to\infty}\frac{1}{N}\left(\frac{N}{q(N)}+N^{1/2}\log\log P(N)\right)=\limsup_{N\to\infty}\left(\frac{1}{q(N)}+\frac{\log\log P(N)}{N^{1/2}}\right)=0,$$
    where we used $q(N)\to\infty$ and $P(N)\ll N$.
\end{proof}

\end{document}